\documentclass[11pt]{amsart}
\usepackage[T1]{fontenc}
\usepackage[utf8]{inputenc}
\usepackage{lmodern}
\usepackage[margin=1in]{geometry}
\usepackage{amsmath,amssymb,amsthm}
\usepackage{microtype}
\usepackage[hidelinks]{hyperref}

\newtheorem{theorem}{Theorem}[section]
\newtheorem{corollary}[theorem]{Corollary}
\newtheorem{lemma}[theorem]{Lemma}
\newtheorem{proposition}[theorem]{Proposition}
\newcommand{\R}{\mathbb R}

\newcommand{\T}{\mathsf T}
\DeclareMathOperator{\diag}{diag}
\DeclareMathOperator{\Ree}{Re}

\title[Counterexamples to the Ramos conjecture for two hyperplanes]{Counterexamples to the Ramos conjecture\\for two hyperplanes}
\date{September 22, 2026}
\author{Florian Frick}
\address{Dept. Math. Sciences, Carnegie Mellon University, Pittsburgh, PA 15213, USA}
\email{frick@cmu.edu}
\hypersetup{pdftitle={Counterexamples to the Ramos conjecture for two hyperplanes}}

\begin{document}

\begin{abstract}
For every $n\ge2$, we construct $4n-2$ nondegenerate Gaussian measures
on $\R^{6n-3}$ that cannot be simultaneously equipartitioned by two
affine hyperplanes. This disproves the Ramos conjecture for two
hyperplanes. Combined with known upper bounds, the construction shows
that $3\cdot2^{s-1}-2$ is the least dimension guaranteeing a common
two-hyperplane equipartition of $2^s-2$ absolutely continuous probability
measures, for every $s\ge3$. We characterize the Gaussian equipartition threshold
in terms of the least number of positive definite quadratic measurements
needed for phase retrieval. Modified complex polynomial multiplication
gives $2r-2$ positive definite measurements
in every even dimension $r\ge4$. This number is optimal when
$r=2^k+2$, $k\ge1$.
\end{abstract}

\maketitle

\section{Introduction}

Two affine hyperplanes \emph{equipartition} a probability measure on
$\R^d$ if their complement consists of four open regions,
each of measure~$1/4$.
Let $\Delta(m,2)$ be the least dimension $d$ such that every collection
of $m$ absolutely continuous probability measures on $\R^d$ admits a
common equipartition by two affine hyperplanes. The problem originates
in Gr\"unbaum's questions on hyperplane partitions~\cite{grunbaum}.
Hadwiger~\cite{hadwiger} proved $\Delta(2,2)=3$.
Ramos~\cite{ramos} conjectured that
\[
\Delta(m,2)=\left\lceil\frac{3m}{2}\right\rceil.
\]
The lower bound follows from placing measures along the moment curve $t\mapsto (t, t^2, \dots, t^d)$; see Avis and
Ramos~\cite{avis,ramos}: each of $m$ disjoint intervals carrying a measure
requires at least three cuts, while two hyperplanes in $\R^d$ meet the
curve at most $2d$ times. Approximation gives absolutely continuous
examples. The conjectured equality holds when $m$ differs from a power
of two by at most one; see~\cite{mlvz, bfhz-topology, bfhz-relative}.

A collection of $k$ hyperplanes equipartitions a probability measure on~$\R^d$ if their complement consists of $2^k$ open regions, each of measure~$2^{-k}$. Let $\Delta(m,k)$ denote the least dimension where any $m$ absolutely continuous probability measures admit an equipartition by $k$ hyperplanes. Ramos conjectured more generally that $\Delta(m,k) = \lceil\frac{m(2^k-1)}{k}\rceil$. The lower bound again follows from placing measures along the moment curve.

Sober\'on~\cite{soberon} recently disproved this more general conjecture by constructing a smooth, strictly positive density
on $\R^4$ that cannot be equipartitioned by four affine hyperplanes. Together with the upper bound of~\cite{bfhz-relative} this shows $\Delta(1,4) = 5$. 
His construction perturbs one Gaussian by cubic and quartic terms to
obstruct simultaneous vanishing on orthogonal frames. The construction here uses several unperturbed Gaussians. Their centers constrain the cutting
normals, and their covariance forms give an exact bilinear obstruction.
We show that the conjecture fails for two hyperplanes for every
$m\equiv2\pmod4$ with $m\ge6$.

\begin{theorem}\label{thm:main}
For every integer $n\ge2$, there are $4n-2$ nondegenerate Gaussian
probability measures on $\R^{6n-3}$ with no common equipartition by two
affine hyperplanes. Consequently,
\[
\Delta(4n-2,2)\ge6n-2.
\]
\end{theorem}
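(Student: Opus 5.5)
The proof has three parts. First, I reduce the question to the normals and to one bilinear condition per measure. Second, I use the centers to confine the normals to a $2n$-dimensional subspace. Third, I build $4n-2$ positive definite forms on $\R^{2n}$ with no common orthogonal pair of nonzero vectors. Part three is the real content.

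\textbf{Step 1: reduction.} Let $\mu_i=N(m_i,\Sigma_i)$ with $\Sigma_i$ positive definite, and suppose $H_1=\{\langle u,x\rangle=a\}$ and $H_2=\{\langle v,x\rangle=b\}$ equipartition $\mu_i$. Summing two adjacent quadrants shows that each $H_j$ bisects $\mu_i$. For a nondegenerate Gaussian, the only bisecting hyperplane with a given normal passes through the mean, so $\langle u,m_i\rangle=a$ and $\langle v,m_i\rangle=b$. With both hyperplanes through the mean, the projection $(\langle u,X\rangle,\langle v,X\rangle)$ is a centered bivariate normal with correlation $\rho$. Its positive quadrant has mass $\tfrac14+\tfrac{\arcsin\rho}{2\pi}$. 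So equipartition holds if and only if $\rho=0$, that is, $u^\T\Sigma_i v=0$.

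The conclusion is that two hyperplanes equipartition all $\mu_i$ if and only if three things hold:
\begin{itemize}
\item $u,v\neq0$;
\item both $u$ and $v$ are orthogonal to every difference $m_i-m_j$, and then $a,b$ are determined;
\item $u^\T\Sigma_iv=0$ for all $i$.
\end{itemize}

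\textbf{Step 2: placing the centers.} Choose the $4n-2$ centers affinely independent in $\R^{6n-3}$. Their affine hull then has a direction space $D$ of dimension $4n-3$. Set $W=D^\perp$, so $\dim W=2n$, and both normals must lie in $W$. The restriction of $\Sigma_i$ to $W$ can be any positive definite form $Q_i$ on $W$: choose $\Sigma_i$ block diagonal with respect to $W\oplus D$. The problem is therefore to find positive definite quadratic forms $Q_1,\dots,Q_{4n-2}$ on $\R^{2n}$ such that
\[
Q_i(u,v)=0 \text{ for all } i \quad\Longrightarrow\quad u=0 \text{ or } v=0.
\]

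Two simplifications help here. Dependent pairs are excluded automatically, since $Q_i(v,v)>0$ for $v\neq0$. The condition also depends only on the linear span $\mathcal S$ of the forms. So it suffices to find a $(4n-2)$-dimensional space $\mathcal S$ of symmetric bilinear forms that contains one positive definite form $P$ and has no common orthogonal pair of nonzero vectors. Given a basis $B_1,\dots,B_{4n-2}$ of $\mathcal S$, the forms $B_i+tP$ for large $t$ are all positive definite and still span $\mathcal S$. In tensor language, $\mathcal S^\perp$ must contain no nonzero symmetric matrix $uv^\T+vu^\T$ of signature $(1,1)$. This is the phase-retrieval reformulation the abstract mentions.

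\textbf{Step 3: the construction (main obstacle).} Identify $\R^{2n}\cong\mathbb C^n$ with complex polynomials of degree $<n$. The starting point is the product $u(z)\overline{v}(z)$, whose coefficients $c_0,\dots,c_{2n-2}$ are $2n-1$ complex numbers, i.e.\ exactly $4n-2$ real bilinear forms. Since $\mathbb C[z]$ is an integral domain, they have no common zero with $u,v\neq0$, and a dimension count over $\R$ suggests this should be sharp.

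The difficulty is that only the real parts $\Ree c_k$ are symmetric; the imaginary parts are antisymmetric. I would therefore look for a ``modified'' multiplication. One option is to twist one factor by a conjugation or reversal, such as $u(z)\,\overline{v(\bar z)}$ or $z^{n-1}\overline{v(1/\bar z)}$, so that all $4n-2$ real coefficient functionals become symmetric in $(u,v)$. The twist must still preserve the rule that a vanishing product forces a vanishing factor, and the span must contain a positive definite form (for instance a Hermitian norm $\sum|u_j|^2$ arising from a middle coefficient). Making symmetry, the no-zero-divisor property, and the presence of a positive definite form hold simultaneously is the step I expect to be hardest. If the twist fails for small $n$, I would check the case $n=2$ (six forms on $\R^4$) by hand first.

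\textbf{Conclusion.} Once the forms exist, Steps 1 and 2 give $4n-2$ nondegenerate Gaussians on $\R^{6n-3}$ with no common equipartition by two affine hyperplanes. This shows $6n-3$ does not guarantee one, so $\Delta(4n-2,2)\ge6n-2$.
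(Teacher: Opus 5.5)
Your Steps 1 and 2 are correct and match the paper's reduction. These are the Gaussian orthogonality criterion, the affinely independent centers with block-diagonal covariances, and the trick of adding a positive definite element of the span. The gap is Step 3, which is the real content of the theorem. You leave it as a search, and the twists you suggest provably cannot work.

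Since $\overline{v(\bar z)}=\sum_b\overline{v_b}z^b$, the map $u(z)\overline{v(\bar z)}$ is literally your starting map. Its coefficients $c_k(u,v)=\sum_{a+b=k}u_a\overline{v_b}$ satisfy $c_k(v,u)=\overline{c_k(u,v)}$. The reversed version has coefficients $d_j(u,v)=\sum_{a-b=j}u_a\overline{v_b}$, which satisfy $d_j(v,u)=\overline{d_{-j}(u,v)}$. In both cases the symmetric forms in the span of the $4n-2$ real coordinates form a space of dimension at most $2n-1$. But any phase-retrieving family on $\R^{2n}$ has at least $2n$ members, because $A_1u,\dots,A_mu$ must span $\R^{2n}$. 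Conjugation is the source of the antisymmetry, not a remedy for it.

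The missing idea is to drop the conjugation and repair positivity instead. The coefficients $c_j(p,q)=\sum_{a+b=j}p_aq_b$ of the ordinary product $pq$ are complex bilinear and symmetric. So all $4n-2$ of their real and imaginary parts are symmetric real bilinear forms, and they have no common zero with $p,q\neq0$ because $\mathbb{C}[t]$ is a domain. The genuine obstruction is that every $Q$ in their span satisfies $Q(ip,ip)=-Q(p,p)$. Hence the span contains no positive definite form, and your large-$t$ device has nothing to work with.

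Your instinct to use the Hermitian norm is right, but it should be grafted onto one coordinate of the bilinear product. The paper replaces $\Ree c_1$ by $D(p,q)=\Ree\sum_a p_a\overline{q_a}+\frac12\Ree c_1(p,q)$. This form is positive definite, since $D(p,p)=\sum_a|p_a|^2+\Ree(p_0p_1)\ge\frac12\sum_a|p_a|^2$.

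Suppose $D$ and the $4n-3$ retained forms $T_i$ all vanish at $(p,q)$ with $p,q\neq0$. Then $pq=\lambda t$ with $\lambda$ real and nonzero, so up to swapping, $p=\alpha$ and $q=\beta t$. Their supports are disjoint, which kills the Hermitian term of $D$ and leaves $D(p,q)=\lambda/2\neq0$, a contradiction.

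Finally, $|T_i(p,p)|\le\sum_a|p_a|^2$, so $D$ and $D+\frac14T_i$ are $4n-2$ positive definite forms with the same span and hence the same common zeros. Your Steps 1 and 2 then finish the proof.
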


The first instance consists of six measures in $\R^9$. All measures in
the theorem have smooth, everywhere positive densities. The construction
also determines an infinite family of exact equipartition dimensions. For general $m$, Mani-Levitska, Vre\'cica, and
\v Zivaljevi\'c~\cite[Theorem~39]{mlvz} proved
$\Delta(m,2)\le m+2^{\lfloor\log_2m\rfloor}$. This agrees with the moment curve lower bound for $m = 2^s-1$, and agrees with the new lower bounds above for $m=2^s-2$ and $s\ge3$. Thus we obtain:

\begin{corollary}\label{cor:exact}
For every integer $s\ge3$,
\[
\Delta(2^s-2,2)=3\cdot2^{s-1}-2.
\]
\end{corollary}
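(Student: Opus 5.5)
Corollary~\ref{cor:exact} follows by matching a lower bound from Theorem~\ref{thm:main} with the upper bound of Mani-Levitska, Vre\'cica, and \v Zivaljevi\'c, so the plan is just to check that the parameters line up.

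\emph{Lower bound.} Fix $s\ge3$ and set $n=2^{s-2}$. Then $n\ge2$ and $4n-2=2^s-2$. Theorem~\ref{thm:main} gives $4n-2$ nondegenerate Gaussian measures on $\R^{6n-3}$ with no common equipartition by two affine hyperplanes. Since $6n-3=3\cdot2^{s-1}-3$, the dimension $3\cdot2^{s-1}-3$ does not guarantee an equipartition. To conclude $\Delta(2^s-2,2)\ge 3\cdot2^{s-1}-2$, we also need that no smaller dimension $d$ guarantees one, so monotonicity in $d$ must be checked. Take a counterexample in $\R^{6n-3}$ and push the measures forward under a generic linear projection to $\R^d$. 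This preserves absolute continuity, since projected Gaussians are again nondegenerate Gaussians. Any two-hyperplane equipartition in $\R^d$ pulls back to one in $\R^{6n-3}$, because preimages of hyperplanes are hyperplanes and the measures of the four regions are preserved. Hence a counterexample in dimension $6n-3$ yields one in every $d\le 6n-3$. This step is the only point needing care, and it is routine.

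\emph{Upper bound.} By \cite[Theorem~39]{mlvz}, $\Delta(m,2)\le m+2^{\lfloor\log_2 m\rfloor}$. For $m=2^s-2$ with $s\ge3$ we have $2^{s-1}\le 2^s-2<2^s$, so $\lfloor\log_2 m\rfloor=s-1$. The bound becomes $2^s-2+2^{s-1}=3\cdot2^{s-1}-2$. Combining the two bounds gives $\Delta(2^s-2,2)=3\cdot2^{s-1}-2$.

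No real obstacle arises. The floor computation requires $2^s-2\ge2^{s-1}$, i.e.\ $s\ge2$, and $n\ge2$ requires $s\ge3$, which explains the hypothesis $s\ge3$.
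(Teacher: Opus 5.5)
Your proposal is correct and follows the paper's argument. You take $n=2^{s-2}$ in Theorem~\ref{thm:main} for the lower bound and match it with the Mani-Levitska--Vre\'cica--\v Zivaljevi\'c bound $\Delta(m,2)\le m+2^{\lfloor\log_2 m\rfloor}$ at $m=2^s-2$. Your projection argument for monotonicity in dimension is the same step the paper uses implicitly in the ``Consequently'' of Theorem~\ref{thm:main} and states in the proof of Theorem~\ref{thm:threshold}.
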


The first new exact value that follows from this is $\Delta(6,2)=10$.
Simon~\cite{simon} proved the same exact values as in Corollary~\ref{cor:exact} when the
two cutting hyperplanes are required to be perpendicular.

The relative obstruction argument in~\cite[Theorem~1.5]{bfhz-relative} establishes sharp
upper bounds for~$\Delta(m,2)$, whenever the number of masses~$m$ deviates from a power of two by at most one. Also see the companion paper~\cite{bfhz-topology}, which surveys the literature at the time. Frick, Murray, Simon, and
Stemmler~\cite{fmss} extend the sharp upper bounds to hyperplane
transversal theorems, while Blagojevi\'c and Crabb~\cite{bc}
study continuously parameterized masses on vector bundles.
For a broader survey of mass partitions, see Rold\'an-Pensado and
Sober\'on~\cite{survey}.

The geometry of the construction is as follows. Any hyperplane that
bisects a nondegenerate Gaussian contains its center. Choosing $m$
affinely independent centers therefore confines the normals of both
cutting hyperplanes to a subspace of codimension $m-1$. On that subspace,
each covariance matrix imposes an orthogonality condition on the two
normals. We choose the covariance matrices so that these conditions have
no common solution with both normals nonzero. A modification of complex
polynomial multiplication gives these forms in every required dimension. A related geometric construction appears in Matschke’s dimension-reduction argument~\cite[Lemma 2.2]{matschke}, where an auxiliary ball forces all cutting hyperplanes through its center.

\section{Phase retrieval}

Phase retrieval asks whether a vector is determined, up to an unavoidable
sign, by quadratic measurements. For real symmetric matrices
$A=(A_1,\ldots,A_m)$ on $\R^r$, write
\[
\Phi_A(x)=(x^{\T}A_1x,\ldots,x^{\T}A_mx).
\]
The family $A$ \emph{does phase retrieval} if
$\Phi_A(x)=\Phi_A(y)$ implies $y=\pm x$ for all $x,y\in\R^r$.

The following criterion connects this problem to Gaussian equipartitions.
For orthogonal projections it is due to Edidin~\cite{edidin};
the general symmetric case is given by Wang and
Xu~\cite{wang-xu}.

\begin{proposition}\label{prop:phase}
A family of real symmetric matrices $A_1,\ldots,A_m$ on $\R^r$ does
phase retrieval if and only if
\begin{equation}\label{eq:nonsingular}
u^{\T}A_iv=0\quad\text{for every }i
\qquad\Longrightarrow\qquad u=0\text{ or }v=0.
\end{equation}
Equivalently, $A_1u,\ldots,A_mu$ span $\R^r$ for every $u\ne0$.
\end{proposition}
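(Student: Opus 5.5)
The plan rests on the polarization identity for symmetric matrices. For symmetric $A$ and any $x,y\in\R^r$,
\[
x^{\T}Ax-y^{\T}Ay=(x+y)^{\T}A(x-y),
\]
since the cross terms $x^{\T}Ay$ and $y^{\T}Ax$ cancel by symmetry. Setting $u=x+y$ and $v=x-y$ turns the phase retrieval condition into condition~\eqref{eq:nonsingular}. The substitution $(x,y)\mapsto(u,v)$ is a linear bijection of $\R^r\times\R^r$, with inverse $x=(u+v)/2$, $y=(u-v)/2$. Moreover, $y=x$ is equivalent to $v=0$, and $y=-x$ is equivalent to $u=0$.

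\emph{Phase retrieval implies \eqref{eq:nonsingular}.} Suppose $u^{\T}A_iv=0$ for all $i$. Put $x=(u+v)/2$ and $y=(u-v)/2$. The identity gives $x^{\T}A_ix-y^{\T}A_iy=\tfrac14\cdot 4\,u^{\T}A_iv/ \,4\cdot 4$; more simply, $(x+y)^{\T}A_i(x-y)=u^{\T}A_iv=0$. Hence $\Phi_A(x)=\Phi_A(y)$. Phase retrieval then forces $y=\pm x$, which means $v=0$ or $u=0$.

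\emph{\eqref{eq:nonsingular} implies phase retrieval.} Suppose $\Phi_A(x)=\Phi_A(y)$. Then $(x+y)^{\T}A_i(x-y)=0$ for every $i$. By \eqref{eq:nonsingular}, either $x+y=0$ or $x-y=0$, that is, $y=\pm x$.

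\emph{The equivalent spanning formulation.} Fix $u\ne0$. The vectors $A_1u,\ldots,A_mu$ fail to span $\R^r$ exactly when some nonzero $v$ is orthogonal to all of them. By symmetry, $v^{\T}A_iu=u^{\T}A_iv$, so this happens exactly when some nonzero $v$ satisfies $u^{\T}A_iv=0$ for all $i$. Therefore \eqref{eq:nonsingular} holds if and only if $A_1u,\ldots,A_mu$ span $\R^r$ for every $u\neq0$.

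The argument has no real obstacle. The only point to watch is that symmetry of the $A_i$ is used twice: once to make the polarization identity hold, and once to swap $u$ and $v$ in the spanning reformulation.
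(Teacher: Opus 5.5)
Your proof is correct and follows the paper's argument: the polarization identity $\Phi_A(u+v)-\Phi_A(u-v)=4\bigl(u^{\T}A_1v,\ldots,u^{\T}A_mv\bigr)$ (your form $x^{\T}Ax-y^{\T}Ay=(x+y)^{\T}A(x-y)$ is the same identity), the change of variables between $(x,y)$ and $(u,v)$, and the spanning reformulation by orthogonal complements using symmetry. The only blemish is the garbled expression $\tfrac14\cdot 4\,u^{\T}A_iv/4\cdot 4$, which you should delete and keep only the clean line after it.
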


\begin{proof}
Polarization gives
\[
\Phi_A(u+v)-\Phi_A(u-v)
=4\bigl(u^{\T}A_1v,\ldots,u^{\T}A_mv\bigr).
\]
Here $u+v=\pm(u-v)$ precisely when $u=0$ or $v=0$.
Conversely, any pair $x,y$ can be written as $u+v,u-v$.
The spanning condition follows by taking orthogonal complements.
\end{proof}

Thus phase retrieval is equivalent to nonsingularity of a symmetric
bilinear map. Write $p_+(r)$ for the minimum number of positive
semidefinite quadratic measurements doing phase retrieval on~$\R^r$.
The same minimum is obtained if all matrices are required to be positive
definite. Indeed, for a positive semidefinite phase-retrieving family
$Q_1,\ldots,Q_m$, the sum $S=\sum_iQ_i$ is positive definite, since a 
common kernel vector would have the same measurements as zero. Set
$A_i=S+Q_i$. These positive definite measurements contain exactly the
same information, because
\begin{equation}\label{eq:positive}
Q_i=A_i-\frac1{m+1}\sum_j A_j.
\end{equation}

\section{The equivalence for Gaussian measures}

Let $\mu_i=N(c_i,\Sigma_i)$, $1\le i\le m$, be nondegenerate Gaussian
probability measures on~$\R^d$. Recall that the covariance of real
random variables $Y$ and $Z$ with finite second moments is
\[
  \operatorname{Cov}(Y,Z)
  = \mathbb{E}\bigl[(Y-\mathbb{E}Y)(Z-\mathbb{E}Z)\bigr].
\]
For a random vector $X_i$ with distribution $\mu_i$, its mean is $c_i$
and its covariance matrix is
\[
  \Sigma_i=\mathbb{E}[(X_i-c_i)(X_i-c_i)^{\T}].
\]
The common normal space determined by the centers is
\[
  V=\operatorname{span}\{c_i-c_1:1\le i\le m\}^{\perp}.
\]
Restricting the covariance form to $u,v\in V$ gives the symmetric
operator $A_i\colon V\to V$ characterized by
\[
  \langle u,A_i v\rangle
  =u^{\T}\Sigma_i v
  =\operatorname{Cov}\bigl(\langle u,X_i\rangle,
                          \langle v,X_i\rangle\bigr),
  \qquad u,v\in V.
\]
Thus $A_i=\pi_V\Sigma_i|_V$, where $\pi_V$ denotes orthogonal
projection onto~$V$; the projection is needed because $\Sigma_i$
need not preserve~$V$. Equivalently, $A_i$ is the covariance
operator of the projected random vector~$\pi_VX_i$.
Each $A_i$ is positive definite on~$V$, since
$\langle u,A_i u\rangle=u^{\T}\Sigma_i u>0$ for every nonzero $u\in V$.

If $V=\{0\}$, no hyperplane contains all centers,
so there is no common bisector. Otherwise, we record the following
immediate consequence of the standard Gaussian orthogonality criterion
and Proposition~\ref{prop:phase}. We include the proof for completeness.

\begin{proposition}\label{prop:gaussian}
Assume $\dim V\ge1$. The measures $\mu_1,\ldots,\mu_m$ admit a common
equipartition by two affine hyperplanes if and only if
$A_1,\ldots,A_m$ fail to do phase retrieval on $V$.
\end{proposition}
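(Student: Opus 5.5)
We reduce the statement to the nonsingularity criterion of Proposition~\ref{prop:phase} by analyzing what an equipartition of a single nondegenerate Gaussian means for two affine hyperplanes $H_j=\{x:\langle u_j,x\rangle=b_j\}$, $j=1,2$, with $u_1,u_2\ne0$.

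The first step is to show that each $H_j$ bisects every $\mu_i$. If $H_1,H_2$ equipartition $\mu_i$, then each side of $H_1$ is a union of two of the four regions. Hence $H_1$ bisects $\mu_i$, and similarly for $H_2$. Since $\langle u_j,X_i\rangle$ is a one-dimensional Gaussian with mean $\langle u_j,c_i\rangle$ and positive variance $u_j^{\T}\Sigma_iu_j$, bisection forces $b_j=\langle u_j,c_i\rangle$. Thus both hyperplanes contain all centers. This gives $\langle u_j,c_i-c_1\rangle=0$ for all $i$, i.e.\ $u_1,u_2\in V$. The hyperplanes must also be non-parallel, since otherwise the complement does not have four regions. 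Conversely, any $u_1,u_2\in V$ together with $b_j=\langle u_j,c_1\rangle$ give hyperplanes through all centers.

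The second step is the Gaussian orthogonality criterion. Given that both hyperplanes pass through $c_i$, put $Y=\langle u_1,X_i-c_i\rangle$ and $Z=\langle u_2,X_i-c_i\rangle$. These form a centered bivariate Gaussian, and each quadrant $\{Y>0,Z>0\}$ etc.\ must have mass $1/4$. By Sheppard's formula, $P(Y>0,Z>0)=\tfrac14+\tfrac1{2\pi}\arcsin\rho$, where $\rho$ is the correlation of $Y$ and $Z$. Since $\arcsin$ is injective, the quadrant has mass $1/4$ iff $\rho=0$, i.e.\ iff $\operatorname{Cov}(Y,Z)=u_1^{\T}\Sigma_iu_2=\langle u_1,A_iu_2\rangle=0$. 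By symmetry $Y\mapsto -Y$, all four quadrants then have mass $1/4$. If $u_1,u_2$ are parallel, then $\rho=\pm1$ and this condition fails. So the orthogonality condition automatically forces non-parallel normals, and the degenerate case needs no separate treatment. Alternatively, one can avoid Sheppard: after whitening by $\Sigma_i^{-1/2}$ the measure is rotation invariant, the quadrant mass is $\theta/2\pi$ for the angle $\theta$ between the transformed normals, and it equals $1/4$ iff the normals are orthogonal in the $\Sigma_i$ inner product.

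Combining the two steps, a common equipartition exists iff there are nonzero $u_1,u_2\in V$ with $\langle u_1,A_iu_2\rangle=0$ for all $i$. By Proposition~\ref{prop:phase}, applied on $V$ with the symmetric operators $A_i$, this holds iff the family $A_1,\ldots,A_m$ fails to do phase retrieval on $V$. The main point needing care is the quadrant computation in the second step, specifically verifying that it yields exactly the covariance condition and excludes parallel normals. The other steps are bookkeeping, and the hypothesis $\dim V\ge1$ is used only so that Proposition~\ref{prop:phase} applies on a nonzero space.
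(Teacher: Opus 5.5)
Your proof is correct and takes essentially the same route as the paper. Bisection forces both hyperplanes through every center, so the normals lie in $V$. The four-quadrant condition for each $\mu_i$ then reduces to $u_1^{\T}\Sigma_i u_2=0$; you use Sheppard's formula where the paper whitens and uses rotational symmetry, which you also give as an alternative. Positive definiteness rules out parallel normals, and Proposition~\ref{prop:phase} finishes. One trivial slip: after whitening, the wedge $\{Y>0,Z>0\}$ has mass $(\pi-\theta)/2\pi$, not $\theta/2\pi$ (the latter is the mass of $\{Y>0,Z<0\}$). This does not affect the conclusion, since both equal $1/4$ exactly when $\theta=\pi/2$.
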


\begin{proof}
For the standard Gaussian on $\R^d$, two hyperplanes give an
equipartition exactly when they contain the origin and their normals
are perpendicular. This standard fact is used by
Sober\'on~\cite{soberon}.
Indeed, a bisecting hyperplane contains the origin by uniqueness of the
median of every nonzero one-dimensional Gaussian projection.
Coincident hyperplanes cannot give an equipartition. For distinct
hyperplanes through the origin, rotational symmetry in the plane
spanned by their normals gives four equal sectors exactly when the
normals are perpendicular.

For $N(c,\Sigma)$ with $\Sigma$ positive definite, the characterization
follows by an invertible affine change of variables. Set
$B=\Sigma^{1/2}$. If $Z$ is standard Gaussian,
then $c+BZ$ has covariance $BB^{\T}=\Sigma$ and distribution
$N(c,\Sigma)$. Under $x=c+Bz$, the hyperplanes $u^{\T}x=a$ and
$v^{\T}x=b$, with $u,v\ne0$, pull back to
\[
  (B^{\T}u)^{\T}z=a-u^{\T}c,
  \qquad
  (B^{\T}v)^{\T}z=b-v^{\T}c.
\]
This change of variables preserves region probabilities. The standard
Gaussian criterion therefore gives an equipartition exactly when
$a=u^{\T}c$, $b=v^{\T}c$, and
\begin{equation}\label{eq:gaussian}
  u^{\T}\Sigma v=\langle B^{\T}u,B^{\T}v\rangle=0.
\end{equation}
Thus both hyperplanes contain $c$, and their normals are orthogonal
for the covariance inner product.

Both hyperplanes in a common equipartition must therefore contain every
$c_i$. They have equations $\langle u,x-c_1\rangle=0$ and
$\langle v,x-c_1\rangle=0$ for nonzero $u,v\in V$.
By~\eqref{eq:gaussian}, these hyperplanes equipartition every measure
exactly when $u^{\T}A_iv=0$ for every $i$.
Conversely, any such pair defines a common equipartition. Positive
definiteness ensures that $u$ and $v$ are linearly independent.
Proposition~\ref{prop:phase} now proves the claim.
\end{proof}

In particular, an ambiguous pair of signals $x\ne\pm y$ for the
restricted covariance forms gives cutting normals $x+y$ and $x-y$.
Conversely, cutting normals $u,v$ give indistinguishable signals $u+v$
and $u-v$. Thus \emph{successful} phase retrieval corresponds to
\emph{nonexistence} of a Gaussian equipartition. The equivalence holds
for each fixed Gaussian family.

We next pass from a phase-retrieving family to measures in the desired
dimension. 

\begin{lemma}\label{lem:transfer}
If $m$ positive semidefinite symmetric forms on $\R^r$ do phase
retrieval, then there are $m$ nondegenerate Gaussian measures on
$\R^{m+r-1}$ with no common two-hyperplane equipartition.
Consequently, $\Delta(m,2)\ge m+r$.
\end{lemma}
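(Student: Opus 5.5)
First I would pass from semidefinite to definite forms, using the remark at the end of the phase retrieval section. Given positive semidefinite $Q_1,\ldots,Q_m$ on $\R^r$ that do phase retrieval, set $S=\sum_iQ_i$ and $A_i=S+Q_i$. Then each $A_i$ is positive definite, and by \eqref{eq:positive} the family $A_1,\ldots,A_m$ still does phase retrieval. Alternatively, one sees this directly from criterion \eqref{eq:nonsingular}: if $u^{\T}A_iv=0$ for all $i$, then summing gives $(m+1)u^{\T}Sv=0$, hence $u^{\T}Q_iv=0$ for all $i$, hence $u=0$ or $v=0$.

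Next I would realize these forms as restricted covariances. Set $d=m+r-1$ and write $\R^d=\R^{m-1}\oplus\R^r$. Choose centers $c_1=0$ and $c_{i}=e_{i-1}$ for $2\le i\le m$, where $e_1,\ldots,e_{m-1}$ is the standard basis of the first summand. These centers are affinely independent, and
\[
V=\operatorname{span}\{c_i-c_1\}^{\perp}=\{0\}\oplus\R^r ,
\]
so $\dim V=r\ge1$. The case $r=0$ cannot occur, since a phase-retrieving family needs $r\ge1$ to be meaningful; and if $r=0$, then $V=\{0\}$ and the conclusion holds anyway. Take $\Sigma_i=I_{m-1}\oplus A_i$, which is block diagonal and positive definite. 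Then $\Sigma_i$ preserves $V$, so $\pi_V\Sigma_i|_V=A_i$ exactly. Consequently $\mu_i=N(c_i,\Sigma_i)$ is a nondegenerate Gaussian.

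Then I would apply the equivalence. By Proposition~\ref{prop:gaussian}, the measures $\mu_1,\ldots,\mu_m$ admit a common two-hyperplane equipartition if and only if $A_1,\ldots,A_m$ fail to do phase retrieval on $V\cong\R^r$. They do phase retrieval, so no common equipartition exists in $\R^{m+r-1}$. This gives $\Delta(m,2)>m+r-1$, that is, $\Delta(m,2)\ge m+r$. One small point needs care: $\Delta$ is defined for absolutely continuous probability measures in dimension $d$. The Gaussians qualify directly, so no approximation step is needed.

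There is no real obstacle here. The only points requiring care are the positive-definite upgrade and the requirement that the covariance blocks restrict exactly to $A_i$. The block-diagonal choice handles the second point, since no projection correction is needed.
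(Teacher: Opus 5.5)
Your proposal is correct and follows the paper's proof essentially verbatim. You upgrade to positive definite forms $A_i=S+Q_i$, place $m$ affinely independent centers in the $\R^{m-1}$ summand so that $V=0\oplus\R^r$, take block-diagonal covariances $\diag(I_{m-1},A_i)$, and invoke Proposition~\ref{prop:gaussian}; your direct check of the upgrade via \eqref{eq:nonsingular}, summing to get $(m+1)u^{\T}Sv=0$, is a valid alternative to the span identity \eqref{eq:positive}.
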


\begin{proof}
By~\eqref{eq:positive}, we may assume that the given matrices
$A_1,\ldots,A_m$ are positive definite.
Write $\R^{m+r-1}=\R^{m-1}\oplus\R^r$. Let
$a_1,\ldots,a_{m-1}$ be the standard basis of $\R^{m-1}$ and $a_m=0$.
Set
\[
\mu_i=N\!\left((a_i,0),\diag(I_{m-1},A_i)\right),
\qquad 1\le i\le m.
\]
For these centers, $V=0\oplus\R^r$, and the restricted covariance
forms are $A_i$. Proposition~\ref{prop:gaussian} excludes a common
equipartition.
\end{proof}

Xu~\cite[Theorem~4.2]{xu} constructed six rank-two orthogonal
projections on $\R^4$ that do phase retrieval. Six projections onto
hyperplanes with the same property were constructed in
\cite[Theorem~4.2]{hyperplanes}. Since orthogonal projections are
positive semidefinite, Lemma~\ref{lem:transfer} turns either family
into six nondegenerate Gaussian measures in $\R^9$ with no common
two-hyperplane equipartition. For the full family of counterexamples,
we construct positive definite forms directly, without requiring them
to be projections.

\subsection{Characterizing the Gaussian threshold}

Let $\Delta_G(m,2)$ denote the least dimension guaranteeing an
equipartition by two affine hyperplanes for every $m$ nondegenerate Gaussians, with arbitrary
centers. Recall that $p_+(r)$ denotes the minimum number of positive
semidefinite quadratic measurements doing phase retrieval on~$\R^r$. Set
\[
\rho(m)=\max\{r\ge1:p_+(r)\le m\}.
\]
This maximum exists: $p_+(1)=1$, and the spanning criterion gives
$p_+(r)\ge r$. Restriction to a subspace preserves phase retrieval,
so $p_+$ is nondecreasing.

\begin{theorem}\label{thm:threshold}
For every $m\ge1$, we have
\[
\Delta_G(m,2)=m+\rho(m).
\]
Consequently,
\begin{align}
p_+(r)\le m&\quad\Longrightarrow\quad\Delta(m,2)\ge m+r,
\label{eq:forward}\\
\Delta(m,2)\le m+r-1&\quad\Longrightarrow\quad p_+(r)\ge m+1.
\label{eq:reverse}
\end{align}
\end{theorem}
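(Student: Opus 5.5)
We prove the two inequalities $\Delta_G(m,2)\ge m+\rho(m)$ and $\Delta_G(m,2)\le m+\rho(m)$ separately. The implications \eqref{eq:forward} and \eqref{eq:reverse} then follow directly from the lower bound. For the lower bound, set $r=\rho(m)$, so that $p_+(r)\le m$. Take a phase-retrieving positive semidefinite family on $\R^r$ with at most $m$ members. If it has fewer than $m$ members, pad it with copies of the identity; adding measurements preserves phase retrieval by criterion~\eqref{eq:nonsingular}. Lemma~\ref{lem:transfer} then produces $m$ nondegenerate Gaussians on $\R^{m+r-1}$ with no common equipartition, so $\Delta_G(m,2)\ge m+r$. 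The implication \eqref{eq:forward} is the same argument read for an arbitrary $r$ with $p_+(r)\le m$, using $\Delta\ge\Delta_G$. For \eqref{eq:reverse}, suppose $p_+(r)\le m$. Then \eqref{eq:forward} gives $\Delta(m,2)\ge m+r$, contradicting the hypothesis $\Delta(m,2)\le m+r-1$.

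For the upper bound, let $d\ge m+\rho(m)$ and let $\mu_i=N(c_i,\Sigma_i)$ be $m$ nondegenerate Gaussians on $\R^d$. The centers span an affine subspace of dimension at most $m-1$, so
\[
\dim V\ge d-(m-1)\ge \rho(m)+1.
\]
In particular $\dim V\ge1$, so Proposition~\ref{prop:gaussian} applies. The restricted forms $A_1,\ldots,A_m$ are positive definite on $V$. Identify $V$ with $\R^{\dim V}$ by an isometry; this does not affect whether the forms do phase retrieval. If $A_1,\ldots,A_m$ did phase retrieval on $V$, we would get $p_+(\dim V)\le m$. By the definition of $\rho(m)$ as a maximum, this forces $\dim V\le\rho(m)$, a contradiction. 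So the forms fail to do phase retrieval, and Proposition~\ref{prop:gaussian} yields a common equipartition. Hence every $d\ge m+\rho(m)$ guarantees an equipartition, and $\Delta_G(m,2)\le m+\rho(m)$.

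The only subtle point is the meaning of $\Delta_G$ as a threshold: "least dimension guaranteeing" should mean that every $d\ge\Delta_G$ works. The argument above handles every $d\ge m+\rho(m)$ directly, without needing monotonicity in $d$, and the lower-bound example sits in dimension exactly $m+\rho(m)-1$, so the threshold is pinned down either way. We also use that $\rho(m)$ is finite and at least $1$, which was already noted before the statement. Beyond these bookkeeping points there is no real obstacle: the substance is contained in Proposition~\ref{prop:gaussian} and Lemma~\ref{lem:transfer}.
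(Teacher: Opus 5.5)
Your proposal is correct and is essentially the paper's proof. Lemma~\ref{lem:transfer} with padded measurements gives the counterexample in dimension $m+\rho(m)-1$, the bound $\dim V\ge\rho(m)+1$ with Proposition~\ref{prop:gaussian} gives the equipartitions, and $\Delta_G(m,2)\le\Delta(m,2)$ plus contraposition gives \eqref{eq:forward} and \eqref{eq:reverse}.

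One correction to your closing remark: the threshold is not pinned down ``either way'' without monotonicity. Under the paper's literal ``least dimension'' definition you must also rule out every dimension below $m+\rho(m)-1$, and a single counterexample in dimension $m+\rho(m)-1$ does not do that. The paper's project-and-lift monotonicity (if dimension $d$ works, so does $d+1$) fills exactly this step.
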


\begin{proof}
Lemma~\ref{lem:transfer} gives a Gaussian counterexample in dimension
$m+\rho(m)-1$; repeat measurements if fewer than $m$ are needed.
Conversely, in dimension $m+\rho(m)$ the common normal space of any
$m$ centers has dimension at least $\rho(m)+1$. The compressed
covariance forms cannot do phase retrieval, by the definition of
$\rho(m)$, so Proposition~\ref{prop:gaussian} gives an equipartition.
The Gaussian equipartition property is monotone in dimension: project
onto a lower-dimensional space and lift the cutting hyperplanes.
This proves the identity. Since $\Delta_G(m,2)\le\Delta(m,2)$,
the first implication follows; the second is its contrapositive.
\end{proof}

Thus a small phase-retrieving family produces an equipartition
counterexample, while a lower bound on measurement numbers forces
Gaussian equipartitions.

\section{Positive definite forms from polynomial multiplication}

Wang and Xu~\cite[Theorem~5.1(i)]{wang-xu} record that $2r-2$
symmetric quadratic measurements suffice for phase retrieval in every
even dimension $r$. Their statement allows indefinite forms.
To apply Lemma~\ref{lem:transfer}, we need a nonsingular system whose
span contains a positive definite form. We obtain such a system by
changing one coordinate of polynomial multiplication.

\begin{lemma}\label{lem:polynomials}
For every $n\ge2$, there are $4n-2$ positive definite symmetric
real bilinear forms on~$\R^{2n}$ satisfying~\eqref{eq:nonsingular}, equivalently, their associated quadratic measurements do phase retrieval.
\end{lemma}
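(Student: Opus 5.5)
The plan is to realize $\R^{2n}$ as the space $\mathbb C[t]_{<n}$ of complex polynomials of degree less than $n$ and to start from complex polynomial multiplication
\[
(p,q)\longmapsto pq=\sum_{k=0}^{2n-2}c_k(p,q)\,t^k,\qquad c_k(p,q)=\sum_{i+j=k}p_iq_j .
\]
This map is real bilinear and symmetric, since multiplication is commutative. Its $2n-1$ complex coefficients give $4n-2$ real symmetric bilinear forms, namely $\Ree c_k$ and $\operatorname{Im} c_k$. The map is nonsingular because $\mathbb C[t]$ is an integral domain. These forms are indefinite, however, so the plan is to change exactly one coordinate so that a positive definite form enters the span while nonsingularity~\eqref{eq:nonsingular} survives. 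Concretely, I would fix an index $k$ (say $k=0$) and replace $\Ree c_k$ by
\[
L(p,q)=\Ree\sum_{i}p_i\overline{q_i}+2\Ree c_k(p,q).
\]
The first summand $H(p,q)=\Ree\sum_i p_i\overline{q_i}$ is the standard Euclidean inner product on $\R^{2n}$. It is symmetric and positive definite, and $H=L-2\Ree c_k$ lies in the span of the new system.

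The nonsingularity check uses lowest and highest terms. Suppose $p,q\ne0$ and all $4n-2$ new forms vanish. Let $a,b$ be the lowest and $a',b'$ the highest degrees of nonzero coefficients of $p$ and $q$. Then $c_{a+b}=p_aq_b\ne0$ and $c_{a'+b'}=p_{a'}q_{b'}\ne0$. If $a+b\ne a'+b'$, these are two distinct nonzero complex coefficients. Only one real coordinate was altered, so at least one unaltered real coordinate is nonzero, which is a contradiction. If $a+b=a'+b'$, then $p=\alpha t^a$ and $q=\beta t^b$ are monomials, and $pq=\alpha\beta t^{a+b}$. All unaltered coordinates vanish only if $a+b=k$ and $\alpha\beta$ is a nonzero real number. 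In that case, if $a\ne b$ we get $H(p,q)=0$ and $L=2\alpha\beta\ne0$. If $a=b$, write $\beta=|\beta|e^{i\theta}$. Then $\alpha\bar\beta=\alpha\beta e^{-2i\theta}$, so $L=\alpha\beta(\cos2\theta+2)\ne0$. Hence $L$ is nonzero, which is again a contradiction. The coefficient $2$ in $L$ is chosen exactly so that this last case cannot cancel. Checking this case is the delicate step, and if it failed I would adjust the weight.

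Finally, I would pass from a nonsingular system whose span contains a positive definite form to a system consisting of positive definite forms. Condition~\eqref{eq:nonsingular} depends only on the linear span of the forms, because it says $u^{\T}Bv=0$ for all $B$ in the span. So it suffices to find $4n-2$ positive definite forms with the same span. Write the system as $B_1,\dots,B_m$ with $m=4n-2$, and let $H=\sum_i\lambda_iB_i$. Put $B_i'=B_i+sH$ for a large real $s>0$. Each $B_i'$ is positive definite once $s$ is large, since $H$ is positive definite and the $B_i$ are bounded on the unit sphere. The linear change of variables is $I+s\,\mathbf 1\lambda^{\T}$, whose determinant is $1+s\sum_i\lambda_i$. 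This is nonzero for all but at most one value of $s$. So for generic large $s$ the span is unchanged, and the $B_i'$ are $4n-2$ positive definite forms satisfying~\eqref{eq:nonsingular}. Proposition~\ref{prop:phase} then translates this into phase retrieval.
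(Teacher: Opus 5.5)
Your construction breaks down at the claim that $H=L-2\Ree c_k$ lies in the span of the new system. $\Ree c_k$ is exactly the form you discarded, so this identity does not place $H$ in the span.

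For your choice $k=0$ the span contains no positive definite form at all. Write an element of the span as $F=\gamma L+\sum_i\gamma_iT_i$, where $T_i$ runs over the $4n-3$ retained coefficient forms. On constant polynomials every $c_j$ with $j\ge1$ vanishes, and $\operatorname{Im}c_0$ vanishes at the constants $1$ and $i$. Hence $F(1,1)=\gamma L(1,1)=3\gamma$ and $F(i,i)=\gamma L(i,i)=-\gamma$, so $F(1,1)=-3F(i,i)$ for every $F$ in the span. Your last step starts from a positive definite $H=\sum_i\lambda_iB_i$, so it has nothing to start from.

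Adjusting the weight does not help for any even $k$. Take $L=H+w\Ree c_k$ and evaluate $F$ at $p=q=zt^{k/2}$ with $z=|z|e^{i\phi}$. This gives $|z|^2(\gamma+\gamma w\cos2\phi+\gamma'\sin2\phi)$, where $\gamma'$ is the coefficient of $\operatorname{Im}c_k$. Positivity for all $\phi$ forces $\gamma>0$ and $|w|<1$. But your own computation in the case $a=b$ becomes $L(p,q)=\alpha\beta(\cos2\theta+w)$, which vanishes for a suitable $\theta$ whenever $|w|\le1$. The weight $2$ you chose to block this cancellation is precisely what destroys positivity.

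The repair is to take $k$ odd, as the paper does with $k=1$. Then $a+b=k$ forces $a\ne b$, the delicate case disappears, and $L(p,q)=w\alpha\beta\ne0$ for every $w\ne0$. You are then free to take $w$ small, so that the replacement form is itself positive definite. The paper uses $D=H+\frac12\Ree c_1$, for which $D(p,p)=\sum_a|p_a|^2+\Ree(p_0p_1)\ge\frac12\sum_a|p_a|^2$.

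With this change, your lowest/highest-degree argument proves~\eqref{eq:nonsingular}. It is a more explicit form of the paper's observation that the retained equations force $pq=\lambda t$ with $\lambda$ real. Your perturbation step, with $D$ in place of $H$, then yields $4n-2$ positive definite forms. The paper writes these down directly as $A_1=D$ and $A_{i+1}=D+\frac14T_i$, using $|c_j(p,p)|\le\sum_a|p_a|^2$.
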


\begin{proof}
Identify $\R^{2n}$ with the space of complex polynomials of degree
less than $n$. For
$p(t)=\sum_{a=0}^{n-1}p_at^a$ and
$q(t)=\sum_{a=0}^{n-1}q_at^a$, write
\[
c_j(p,q)=\sum_{a+b=j}p_aq_b,
\qquad 0\le j\le2n-2,
\]
for the coefficients of $pq$. Their real and imaginary parts are
$4n-2$ symmetric real bilinear forms. 
Their span contains no positive definite form: every real linear
combination $Q$ of these forms satisfies $Q(ip,ip)=-Q(p,p)$.
Retain all these $4n-2$ symmetric real bilinear forms except
$\Ree c_1$, and denote these $4n-3$ forms by $T_i$. Replace the
discarded form by
\[
D(p,q)=\Ree\sum_{a=0}^{n-1}p_a\overline{q_a}
+\frac12\Ree c_1(p,q).
\]
This form is positive definite, since
\[
D(p,p)=\sum_a|p_a|^2+\Ree(p_0p_1)
\ge\frac12\sum_a|p_a|^2.
\]

Suppose that $D(p,q)=T_i(p,q)=0$ for every $i$, with $p,q\ne0$.
The retained coefficient equations give $pq=\lambda t$ for a real
$\lambda\ne0$. Thus up to interchanging $p$ and $q$, we have $p=a$ and $q=bt$ with
$ab=\lambda$. Their coefficient supports are disjoint. Thus the
first term of $D(p,q)$ vanishes, while its second term is
$\lambda/2\ne0$, a contradiction.

Finally, the coefficient bound
\[
|c_j(p,p)|\le\sum_{a+b=j}|p_a||p_b|\le\sum_a|p_a|^2
\]
shows that the $4n-2$ forms
\[
A_1=D,\qquad A_{i+1}=D+\frac14T_i
\quad(1\le i\le4n-3)
\]
satisfy $A_i(p,p)\ge\frac14\sum_a|p_a|^2$ and are therefore positive
definite. If $A_i(p,q)=0$ for every $i$, then $D(p,q)=0$
and $T_i(p,q)=0$ for every $i$, so the preceding argument
proves~\eqref{eq:nonsingular}.
\end{proof}

\begin{proof}[Proof of Theorem~\ref{thm:main}]
Apply Lemma~\ref{lem:transfer} to the forms in
Lemma~\ref{lem:polynomials}, with $m=4n-2$ and $r=2n$.
The resulting measures lie in dimension $m+r-1=6n-3$.
\end{proof}

The counterexamples may also be chosen to have compact support.
Let $B^d$ be the Euclidean unit ball and replace each Gaussian
$N(c_i,\Sigma_i)$ by the uniform probability measure on
\[
E_i=c_i+\Sigma_i^{1/2}B^d.
\]
This measure has center $c_i$ and covariance matrix $\Sigma_i/(d+2)$.
Every bisecting hyperplane contains~$c_i$, and an affine transformation
to $B^d$ shows that two such hyperplanes equipartition the measure
exactly when their normals satisfy $u^{\T}\Sigma_i v=0$.
Thus the same bilinear obstruction applies.

\section{Consequences for phase retrieval}

Lemma~\ref{lem:polynomials} gives $p_+(r)\le 2r-2$ for every even
$r\ge4$. The correspondence with Gaussian equipartitions shows that
this construction is optimal in infinitely many dimensions.

\begin{corollary}\label{cor:phase-counts}
For every integer $k\ge1$,
\[
p_+(2^k+1)=2^{k+1}+1,
\qquad
p_+(2^k+2)=2^{k+1}+2.
\]
Both minima can be attained with all measurements positive definite.
\end{corollary}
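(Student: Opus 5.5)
We need upper bounds and lower bounds for $p_+(2^k+1)$ and $p_+(2^k+2)$. Write $m=2^{k+1}+2$ and $r=2^k+2$, so $m=2r-2$.

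\emph{Upper bounds.} For $k\ge1$, $r=2^k+2$ is even and at least $4$. Lemma~\ref{lem:polynomials} with $n=r/2$ gives $4n-2=2r-2=2^{k+1}+2$ positive definite forms doing phase retrieval on $\R^r$. Hence $p_+(2^k+2)\le 2^{k+1}+2$.

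For $r-1=2^k+1$, restrict these forms to a hyperplane. Restriction preserves phase retrieval and positive definiteness, but we need one fewer form. Since phase retrieval is an open condition and the forms are $m$ in number, the plan is the following. Take the restriction $W\cong\R^{2^k+1}$ and ask whether some nonzero linear combination of the restricted forms can be dropped. Concretely, we want $m-1$ forms in the span whose common solutions still satisfy \eqref{eq:nonsingular}. A simpler route is to read the upper bound off the threshold identity. If the lower bound below shows that $\Delta(2^{k+1}+1,2)$ equals some known value, Theorem~\ref{thm:threshold} pins $\rho$. However, Theorem~\ref{thm:threshold} only bounds $\Delta_G$ from above by $\Delta$, so it gives lower bounds on $p_+$ and not upper bounds. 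So for $p_+(2^k+1)\le 2^{k+1}+1$ I would instead produce a direct construction. Remove $c_{2n-2}$'s real part, or use the truncated multiplication of polynomials of degrees $<n$ and $<n-1$ with a real part. Equivalently, identify $\R^{2n-1}$ with pairs $(p)$ where the top coefficient is real, and check that one of the top-coefficient forms becomes redundant. Then add $D$-type corrections as in Lemma~\ref{lem:polynomials} to make all forms positive definite. I expect this explicit odd-dimensional construction, and checking that its kernel argument still works, to be the main obstacle.

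\emph{Lower bounds.} These come from \eqref{eq:reverse} together with the upper bound $\Delta(m,2)\le m+2^{\lfloor\log_2 m\rfloor}$ of~\cite{mlvz}.

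For $r=2^k+2$, take $m'=2^{k+1}+1$, so $\lfloor\log_2 m'\rfloor=k+1$. Then
\[
\Delta(m',2)\le m'+2^{k+1}=2^{k+2}+1=m'+r-1+\bigl(2^{k+1}-2^k-1\bigr).
\]
This is too weak to apply \eqref{eq:reverse} directly. Instead I would use $m'=2^{k+1}+1$ together with the sharp bound $\Delta(2^{s}+1,2)=\lceil 3(2^s+1)/2\rceil$ from~\cite{bfhz-relative}, valid since $m'$ differs from a power of two by one. With $s=k+1$ this gives $\Delta(m',2)=3\cdot2^k+2=m'+2^k+1=m'+r-1$. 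Hence \eqref{eq:reverse} yields $p_+(2^k+2)\ge 2^{k+1}+2$.

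Similarly, for $r=2^k+1$ take $m'=2^{k+1}$, with $\Delta(2^{k+1},2)=3\cdot2^k=m'+r-1$. This gives $p_+(2^k+1)\ge 2^{k+1}+1$.

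Combining the two directions proves both equalities. The positive definite attainment follows from \eqref{eq:positive}, or directly from the constructions.
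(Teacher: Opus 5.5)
You handle three of the four bounds exactly as the paper does. The lower bounds come from \eqref{eq:reverse} with the sharp values $\Delta(2^{k+1},2)=3\cdot2^k$ and $\Delta(2^{k+1}+1,2)=3\cdot2^k+2$ from~\cite{bfhz-relative}. The even upper bound $p_+(2^k+2)\le2^{k+1}+2$ comes from Lemma~\ref{lem:polynomials} with $n=2^{k-1}+1$. The gap is the remaining upper bound $p_+(2^k+1)\le2^{k+1}+1$. You only sketch candidate constructions and leave their verification open as ``the main obstacle,'' so the first equality is not proved. The paper closes this by citation. By Balan--Casazza--Edidin~\cite{bce}, $2r-1$ generic vectors $a_i\in\R^r$ give rank-one positive semidefinite measurements $a_ia_i^{\T}$ that do phase retrieval. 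For $r=2^k+1$ that is $2^{k+1}+1$ measurements, and \eqref{eq:positive} makes them positive definite without changing the count.

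Your restriction idea also works, and needs no new kernel argument, once you choose the right form to drop. Take $n=2^{k-1}+1\ge2$ and $W=\{p:p_{n-1}\in\R\}$, a real subspace of dimension $2n-1=2^k+1$. On $W$ the form $\operatorname{Im}c_{2n-2}(p,q)=\operatorname{Im}(p_{n-1}q_{n-1})$ vanishes identically. Note that for this $W$ it is the imaginary part that vanishes, not the real part you mention. Since $2n-2\ge2$, this form is one of the retained $T_i$, not the replaced $\Ree c_1$. So the corresponding $A_j=D+\frac14\operatorname{Im}c_{2n-2}$ restricts to $D|_W=A_1|_W$ and can be discarded. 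Restriction preserves positive definiteness, and it preserves \eqref{eq:nonsingular}: nonzero $u,v\in W$ violating it on $W$ would violate it on $\R^{2n}$. Hence the remaining $4n-3=2^{k+1}+1$ forms are positive definite and do phase retrieval on $W\cong\R^{2^k+1}$. This gives the odd case and its positive definite attainment together. Your observation that restriction preserves phase retrieval is the whole proof once you see that one form vanishes on $W$.
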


\begin{proof}
The sharp equipartition values~\cite{bfhz-relative}
\[
\Delta(2^{k+1},2)=3\cdot2^k,
\qquad
\Delta(2^{k+1}+1,2)=3\cdot2^k+2
\]
give the two lower bounds by~\eqref{eq:reverse}, applied respectively
with $(m,r)=(2^{k+1},2^k+1)$ and
$(m,r)=(2^{k+1}+1,2^k+2)$.
Generic families of $2r-1$ rank-one positive semidefinite measurements
do phase retrieval on~$\R^r$; see~\cite{bce}, giving the first
upper bound. Lemma~\ref{lem:polynomials} supplies the second.
Equation~\eqref{eq:positive} allows the rank-one measurements to be
replaced by positive definite forms without increasing their number.
\end{proof}

Wang and Xu~\cite[Theorem~5.1]{wang-xu} established the corresponding
sharp counts for arbitrary real symmetric measurements, using
projective-space nonembedding results for the lower bounds.
The argument above proves the lower bounds in the positive
semidefinite class directly from equipartition theorems.
Our polynomial construction attains the even-dimensional upper
bound with all measurements positive definite. In particular,
$p_+(4)=6$.

\medskip
\noindent\textbf{AI use statement.}
LLMs were used extensively for proof ideation,
literature review, and for drafting and revising the manuscript.


\begin{thebibliography}{99}

\bibitem{avis}
D.~Avis, \emph{Non-partitionable point sets}, Inform. Process. Lett.
\textbf{19} (1984), no.~3, 125--129.
\href{https://doi.org/10.1016/0020-0190(84)90090-5}{doi:10.1016/0020-0190(84)90090-5}.

\bibitem{bce}
R.~Balan, P.~G. Casazza, and D.~Edidin, \emph{On signal reconstruction
without phase}, Appl. Comput. Harmon. Anal. \textbf{20} (2006), no.~3,
345--356.
\href{https://doi.org/10.1016/j.acha.2005.07.001}{doi:10.1016/j.acha.2005.07.001}.

\bibitem{bc}
P.~V.~M. Blagojevi\'c and M.~C. Crabb, \emph{Many partitions of mass
assignments}, Doc. Math. \textbf{30} (2025), no.~1, 41--104.
\href{https://doi.org/10.4171/DM/980}{doi:10.4171/DM/980}.

\bibitem{bfhz-relative}
P.~V.~M. Blagojevi\'c, F.~Frick, A.~Haase, and G.~M. Ziegler,
\emph{Hyperplane mass partitions via relative equivariant obstruction
theory}, Doc. Math. \textbf{21} (2016), 735--771.
\href{https://arxiv.org/abs/1509.02959}{arXiv:1509.02959}.

\bibitem{bfhz-topology}
P.~V.~M. Blagojevi\'c, F.~Frick, A.~Haase, and G.~M. Ziegler,
\emph{Topology of the Gr\"unbaum--Hadwiger--Ramos hyperplane mass
partition problem}, Trans. Amer. Math. Soc. \textbf{370} (2018),
no.~10, 6795--6824.
\href{https://arxiv.org/abs/1502.02975}{arXiv:1502.02975}.

\bibitem{hyperplanes}
S.~Botelho-Andrade, P.~G. Casazza, D.~Cheng, J.~Haas, T.~T. Tran,
J.~C. Tremain, and Z.~Xu, \emph{Phase retrieval by hyperplanes},
in \emph{Frames and Harmonic Analysis},
Contemp. Math. \textbf{706}, Amer. Math. Soc., Providence, RI,
2018, 21--31.
\href{https://doi.org/10.1090/conm/706/14217}
{doi:10.1090/conm/706/14217}.


\bibitem{edidin}
D.~Edidin, \emph{Projections and phase retrieval}, Appl. Comput.
Harmon. Anal. \textbf{42} (2017), no.~2, 350--359.
\href{https://arxiv.org/abs/1506.00674}{arXiv:1506.00674}.

\bibitem{fmss}
F.~Frick, S.~Murray, S.~Simon, and L.~Stemmler,
\emph{Transversal generalizations of hyperplane equipartitions},
Int. Math. Res. Not. IMRN \textbf{2024} (2024), no.~7, 5586--5618.
\href{https://doi.org/10.1093/imrn/rnad216}{doi:10.1093/imrn/rnad216}.

\bibitem{grunbaum}
B.~Gr\"unbaum, \emph{Partitions of mass-distributions and of convex
bodies by hyperplanes}, Pacific J. Math. \textbf{10} (1960), 1257--1261.
\href{https://doi.org/10.2140/pjm.1960.10.1257}{doi:10.2140/pjm.1960.10.1257}.

\bibitem{hadwiger}
H.~Hadwiger, \emph{Simultane Vierteilung zweier K\"orper},
Arch. Math. (Basel) \textbf{17} (1966), 274--278.

\bibitem{mlvz}
P.~Mani-Levitska, S.~Vre\'cica, and R.~T. \v Zivaljevi\'c,
\emph{Topology and combinatorics of partitions of masses by hyperplanes},
Adv. Math. \textbf{207} (2006), no.~1, 266--296.
\href{https://doi.org/10.1016/j.aim.2005.11.013}{doi:10.1016/j.aim.2005.11.013}.

\bibitem{matschke}
B.~Matschke, \emph{A note on masspartitions by hyperplanes},
arXiv preprint (2010).
\href{https://arxiv.org/abs/1001.0193}{arXiv:1001.0193}.

\bibitem{ramos}
E.~A. Ramos, \emph{Equipartition of mass distributions by hyperplanes},
Discrete Comput. Geom. \textbf{15} (1996), no.~2, 147--167.
\href{https://doi.org/10.1007/BF02717729}{doi:10.1007/BF02717729}.

\bibitem{survey}
E.~Rold\'an-Pensado and P.~Sober\'on, \emph{A survey of mass partitions},
Bull. Amer. Math. Soc. (N.S.) \textbf{59} (2022), no.~2, 227--267.
\href{https://arxiv.org/abs/2010.00478}{arXiv:2010.00478}.

\bibitem{simon}
S.~Simon, \emph{Hyperplane equipartitions plus constraints},
J. Combin. Theory Ser. A \textbf{161} (2019), 29--50.
\href{https://doi.org/10.1016/j.jcta.2018.07.012}{doi:\allowbreak10.1016/\allowbreak j.jcta.2018.07.012}.

\bibitem{soberon}
P.~Sober\'on, \emph{Four hyperplanes do not always equipartition a mass
in $\mathbb{R}^4$}, arXiv preprint (2026).
\href{https://arxiv.org/abs/2608.23312}{arXiv:2608.23312}.

\bibitem{wang-xu}
Y.~Wang and Z.~Xu, \emph{Generalized phase retrieval: measurement
number, matrix recovery and beyond}, Appl. Comput. Harmon. Anal.
\textbf{47} (2019), no.~2, 423--446.
\href{https://arxiv.org/abs/1605.08034}{arXiv:1605.08034}.

\bibitem{xu}
Z.~Xu, \emph{The minimal measurement number for low-rank matrix
recovery}, Appl. Comput. Harmon. Anal. \textbf{44} (2018), no.~2,
497--508.
\href{https://arxiv.org/abs/1505.07204}{arXiv:1505.07204}.

\end{thebibliography}
\end{document}